\documentclass{amsart}
\usepackage{amssymb}% some symbols, as \nrightarrow, for instance
\usepackage{mathrsfs}% to produce caligraphic text: The foliation \F(f), for instance
\usepackage{hyperref}
\usepackage{verbatim}
\usepackage{graphicx}
\usepackage{subfigure}
\usepackage{psfrag}
\usepackage{enumitem}

\newtheorem{theorem}{Theorem}[section]
\newtheorem{lemma}[theorem]{Lemma}
\newtheorem{corollary}[theorem]{Corollary}

{\theoremstyle{definition}

\newtheorem{remark}{Remark}}

\newcommand{\C}{\mathbb{C}}
\newcommand{\K}{\mathbb{K}}

\usepackage{xcolor}
\newcommand{\todo}[1]{{ \color{magenta} #1 }}
\author[Z. Jelonek]{Zbigniew Jelonek}
\author[G. Menani]{Gustavo Menani}
\author[M. Michalska]{Maria Michalska}

\address{Z.Jelonek, Instytut Matematyczny, Polska Akademia Nauk, \'Sniadeckich 8, 00-656 Warszawa, Poland  
}

\email{najelone@cyf-kr.edu.pl}

\address{G. Menani, Instituto de Ci\^encias Matem\'aticas e de Computa\c{c}\~ao, Universidade de S\~ao Paulo, S\~ao Carlos, SP, Brazil.}

\email{gmenani@usp.br}

\address{M. Michalska, Instituto de Ci\^encias Matem\'aticas e de Computa\c{c}\~ao, Universidade de S\~ao Paulo, S\~ao Carlos, SP, Brazil.}
\email{maria.michalska@usp.br}

\title[On Buzzard's Theorem]{On Buzzard's Theorem}

\thanks{The first named author was partially supported by the grant of Narodowe Centrum Nauki, number 2024/55/B/ST1/01412. The second named author was supported by FAPESP under grant number 2025/24335-9. The third named author was supported by FAPESP under grant number 2024/04171-9. The research of two first named authors was partially funded by the S\~ao Paulo Research Foundation (FAPESP) under grant number 2025/06706-0.}

\begin{document}
	
\maketitle

\begin{abstract}
	Over any infinite field we prove existence of polynomial automorphism with prescribed differentials at points. More precisely, let $\K$ be an infinite field and $a_1,\ldots, a_k;$ $b_1,\ldots , b_k$ be two sequences of different points in~$\K^n$, $n\ge 2$. For any sequence $L_1,\ldots, L_k\in SL(n,\K)$ there exists a polynomial automorphism $\Phi: \K^n\to \K^n$  with 
	jacobian one such that $\Phi(a_i)=b_i$ and $d_{a_i} \Phi= L_i$ for $i=1,\ldots, k.$
\end{abstract}

\section{Introduction}
In this note we study polynomial automorphisms over infinite fields with values and differentials prescribed in points. 
Let us provide a concise background for this problem.  In~\cite{jel} the first named author proved the following theorem:

\begin{theorem}\label{jelonek}
%Let $\K$ be an infinite field ,
Let $n\ge 2$ and $a_1,\ldots, a_k; b_1,\ldots , b_k$ be two sequences of different points of $\C^n$.
There there is a polynomial automorphism $\Phi: \C^n\to \C^n$ such that $\Phi(a_i)=b_i.$
\end{theorem}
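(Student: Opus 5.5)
We prove Theorem~\ref{jelonek} using triangular (elementary) automorphisms only. The construction works over any infinite field, so we write $\K$ for $\C$. We first move the points $a_i$ into a standard configuration, then move that configuration to the $b_i$. Since the inverse of an automorphism is an automorphism, it suffices to show one thing: for any $k$ distinct points $a_1,\dots,a_k$ there is a polynomial automorphism sending them to $k$ prescribed distinct points $c_1,\dots,c_k$ whose first coordinates are pairwise distinct, say $c_i=(t_i,0,\dots,0)$ with the $t_i$ distinct. Composing the map for the $a_i$ with the inverse of the map for the $b_i$ then gives $\Phi$.

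\emph{Step 1: separate the first coordinates.} After a generic linear change of coordinates in $GL(n,\K)$, the first coordinates of $a_1,\dots,a_k$ become pairwise distinct. This uses that $\K$ is infinite: the finitely many differences $a_i-a_j\neq 0$ must avoid the kernel of a linear form, and a generic form does so.

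\emph{Step 2: kill the other coordinates.} Write $a_i=(x_i,y_i)$ with $x_i\in\K$ pairwise distinct and $y_i\in\K^{n-1}$. By Lagrange interpolation there is a polynomial map $P:\K\to\K^{n-1}$ with $P(x_i)=y_i$. The triangular automorphism $(x,y)\mapsto (x,\,y-P(x))$ has inverse $(x,y)\mapsto(x,y+P(x))$, and it sends $a_i$ to $(x_i,0)$.

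\emph{Step 3: adjust the first coordinates.} This is the only slightly delicate step, since a polynomial in $x$ alone is not an automorphism of the line. We use $n\ge2$ and a detour through the second coordinate. Choose a polynomial $Q(x)$ with $Q(x_i)=s_i$ for distinct values $s_i$, and apply $(x,y_2,\dots)\mapsto(x,y_2+Q(x),\dots)$. The points become $(x_i,s_i,0,\dots)$, whose second coordinates are distinct. Next choose a polynomial $R(y_2)$ with $R(s_i)=t_i-x_i$, and apply $(x,y_2,\dots)\mapsto(x+R(y_2),y_2,\dots)$. This gives $(t_i,s_i,0,\dots)$. Finally, a polynomial $S(x)$ with $S(t_i)=s_i$ gives the map $(x,y_2,\dots)\mapsto (x,y_2-S(x),\dots)$, which yields $(t_i,0,\dots,0)$.

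Each map used is triangular and invertible, with constant Jacobian. Composing them proves the claim, and hence the theorem.
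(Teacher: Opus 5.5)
Your proof is correct. It rests on the same two ingredients as the paper's proof, which establishes Theorem~\ref{jelonek} in the stronger form of Theorem~\ref{thm:pointautomorphism}. The first is a generic linear change of coordinates, which is where infiniteness of $\K$ enters. The second is shears $x_r\mapsto x_r+f(x_s)$, with $f$ a one-variable interpolation polynomial on pairwise distinct values of $x_s$.

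The difference is in how these are organized. The paper sends $a^i$ to $b^i$ directly. It first chooses $A,B\in SL_n(\K)$ so that the points $A(a^i)$ have pairwise distinct values in every coordinate and the points $B(b^i)$ have pairwise distinct first coordinates. It then needs only two triangular maps:
\begin{itemize}
\item $P=(x_1+p_1(x_2),\dots,x_{n-1}+p_{n-1}(x_n),x_n)$, which sets the first $n-1$ coordinates to $b^i_1,\dots,b^i_{n-1}$ at once;
\item $H=(x_1,\dots,x_{n-1},x_n+p_n(x_1))$, which corrects the last coordinate using the distinct first coordinates of the targets.
\end{itemize}

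You instead route both configurations through the normal form $(t_i,0,\dots,0)$ and invert one of the two maps. This asks less of the linear change: only distinct first coordinates, on both sides. Your detour through $y_2$ in Step~3 is the same device as the paper's use of $x_2$ to move $x_1$ inside $P$.

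The paper's direct route is more economical. With only two shears of degree $<k$ it gives the bound $\deg F\le k^2$. Since $A,B\in SL_n(\K)$, its Jacobian is exactly $1$, which the paper needs in Theorem~\ref{thm:main} so that $L_i\circ(d_{a^i}F)^{-1}$ lies in $SL_n(\K)$.

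Your composition uses several more shears, so its degree bound is considerably worse. With linear maps from $GL_n(\K)$, its Jacobian is only a nonzero constant. Neither point matters for Theorem~\ref{jelonek} as stated. The Jacobian can also be normalized to $1$ by rescaling one coordinate so that both linear maps lie in $SL_n(\K)$; this does not affect distinctness of first coordinates.
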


\begin{remark}
{\rm This theorem in \cite{jel} was formulated over $\C$, however the method works for every infinite field.}
\end{remark}

Theorem 
\ref{jelonek} was generalized  by Buzzard in \cite{buz} who proved:

\begin{theorem}\label{buzzard}
Let  $n\ge 2$ and $a_1,\ldots, a_k; b_1,\ldots , b_k$ be two sequences of different points from $\C^n.$ Let $L_1,\ldots, L_k\in SL(n,\C)$ be linear isomorphisms with 
jacobian one. 
Then there is a polynomial automorphism $\Phi: \C^n\to \C^n$ with jacobian one such that $\Phi(a_i)=b_i$ and $d_{a_i} \Phi= L_i$ for $i=1,\ldots, k.$
\end{theorem}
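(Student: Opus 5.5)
We first use Theorem~\ref{jelonek} to reduce to the case $a_i=b_i$ for all $i$. Take a polynomial automorphism $\Psi$ with $\Psi(a_i)=b_i$. Compose it with a triangular automorphism $x\mapsto (x_1/c, x_2 c, x_3,\dots,x_n)$ suitably conjugated, or simply note that the Jacobian of $\Psi$ is a nonzero constant $c$. Composing with the linear map $\mathrm{diag}(c^{-1},1,\dots,1)$ and re-applying Theorem~\ref{jelonek} to move the images back, we may assume $\Psi$ has Jacobian one. It then suffices to find a Jacobian-one automorphism $\Theta$ with $\Theta(a_i)=a_i$ and $d_{a_i}\Theta = (d_{a_i}\Psi)^{-1}\cdot{}$\,\ldots\ more precisely $d_{a_i}\Theta=M_i:=(d_{a_i}\Psi)^{-1}L_i\in SL(n,\C)$. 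Then $\Phi=\Psi\circ\Theta$ works.

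Next we choose linear coordinates in which the points $a_1,\dots,a_k$ have pairwise distinct first coordinates $a_{i,1}$; a generic linear change does this. Since $SL(n,\C)$ is generated by elementary matrices $E=I+t e_{pq}$ with $p\ne q$, and fixing the points is preserved under composition (by the chain rule), it suffices to realize, for each $i$ separately, one elementary matrix at $a_i$ while having identity differential at all $a_j$ with $j\neq i$. Composing such maps in the right order then gives $M_i$ at $a_i$ for every $i$ simultaneously.

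To realize $I+te_{pq}$ at $a_i$ we use an elementary (shear) automorphism
\[
x\mapsto x+ f(x_{\hat p})\,e_p,
\]
where $f$ does not depend on $x_p$. Such a map has Jacobian one and fixes the points exactly when $f(a_j)=0$ for all $j$. Its differential at $a_j$ is $I+e_p\,\nabla f(a_j)^T$. So we need a polynomial $f$ in the variables other than $x_p$ with $f(a_j)=0$ and $\nabla f(a_j)=0$ for $j\ne i$, and with $f(a_i)=0$ and $\nabla f(a_i)=t e_q$. This is a Hermite interpolation problem.

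The main obstacle is that $f$ must avoid the variable $x_p$, so the interpolation nodes are the projections of the $a_j$ forgetting the $p$-th coordinate. These projections must be distinct, and when $p=1$ our separating coordinate is lost. We handle this by taking a generic linear coordinate system, so that for every coordinate index $p$ the projected points stay pairwise distinct; this is an open dense condition over the infinite field. Equivalently, we conjugate each elementary matrix by a linear map that moves the separating direction away from $e_p$. With distinct nodes in $\C^{n-1}$ and $n-1\ge1$, the interpolation is solved by a polynomial in one separating linear form $\ell$. Take $f=(x_q-a_{i,q})\,t\,g(\ell)$ plus corrections, where $g(\ell(a_i))=1$ and $g$ vanishes to order two at the other values $\ell(a_j)$. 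If $q$ is used as the separating direction this has to be adjusted, and we pick $\ell$ generic and independent of $x_p$. Checking that this $f$ has the right value and gradient at each point is routine. The whole argument uses only infinitude of the field, which gives the generic choices.
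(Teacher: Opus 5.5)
The gap is the Jacobian normalization in your first paragraph, and it matters. If $\Psi$ has constant Jacobian $c\neq 1$, then $\det M_i=c^{-1}$, so $M_i$ is not a product of elementary matrices. Every shear $x\mapsto x+f(x_{\hat p})e_p$ has Jacobian one, so no $\Theta$ of your form can satisfy $d_{a_i}\Theta=M_i$.

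Neither of your remedies works. The map $x\mapsto(x_1/c,\,cx_2,x_3,\dots,x_n)$ already has determinant one, and conjugation does not change a Jacobian. After composing with $\mathrm{diag}(c^{-1},1,\dots,1)$, the second application of Theorem~\ref{jelonek}, which as stated says nothing about the Jacobian, introduces a new unknown constant $c'$. So that argument is circular.

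You need a point-moving automorphism whose Jacobian is controlled by construction. The paper gets this in Theorem~\ref{thm:pointautomorphism}: after putting the points in general position with $SL_n$ maps, $F=H\circ P$ is a composition of shears $x_j\mapsto x_j+p_j(x_{j+1})$ and $x_n\mapsto x_n+p_n(x_1)$ with one-variable Lagrange interpolants, hence has Jacobian one. Alternatively, keep your $\Psi$ and post-compose it with $x\mapsto(c^{-1}x_1+h(x_2),x_2,\dots,x_n)$. Here you work in linear coordinates with the $b_{i,2}$ pairwise distinct, and $h$ is the Lagrange interpolant with $h(b_{i,2})=(1-c^{-1})b_{i,1}$. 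This map fixes every $b_i$ and has Jacobian $c^{-1}$.

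With that repaired, the rest is correct and is the paper's Lemma~\ref{lem:differentials} in a slightly heavier form. Correcting at the source, $\Phi=\Psi\circ\Theta$, rather than at the target, $\Phi=G\circ F$, makes no difference.

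Your hedges about "corrections" are unnecessary. Let $\ell$ be any linear form not involving $x_p$ that separates the $a_j$, and let $g$ be the Hermite interpolant with $g(\ell(a_i))=1$ and $g\equiv 0 \bmod (s-\ell(a_j))^2$ for $j\neq i$. Then $f=t(x_q-a_{i,q})\,g(\ell)$ already satisfies $f(a_j)=0$ for all $j$, $\nabla f(a_i)=te_q$, and $\nabla f(a_j)=0$ for $j\neq i$, whether or not $\ell$ involves $x_q$.

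In the paper's general position, where every coordinate function separates the points, you can simply take $\ell=x_q$. Then $f$ becomes the one-variable interpolant $f_j(x_{s_j})$ with $f_j(b^i_{s_j})=0$ and $f_j'(b^i_{s_j})=c_i^j$ used there. Your worry about losing the separating coordinate when $p=1$ also disappears: the shear in $x_p$ only needs the column variable $x_q$, and $q\neq p$.
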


Buzzard's theorem was later extended by Forstneric,  who proved in \cite{for} that over~$\C$ we can control not only $1$-jets, but also $m$-jets. Finally, the 
result of Forstneric was generalized in \cite{kal} by Arzhantsev,   Flenner,  Kaliman,  Kutzschebauch, and Zaidenberg, where it is shown that the claim holds not only on 
$\C^n$ but also on every flexible variety (see \cite{kal} for definition of flexibility).

However, all these generalizations hold only over $\C$ and their proofs are based mainly on analytic methods. The aim of this note is to give a proof of the Buzzard theorem over every
infinite field of any characteristic: \newpage

\begin{theorem}\label{glowne}
Let $\K$ be an infinite field. Let  $n\ge 2$ and $a_1,\ldots, a_k; $ $b_1,\ldots , b_k$ be two sequences of different points from $\K^n.$ Let $L_1,\ldots, L_k\in SL(n,\K)$ be linear isomorphisms with 
jacobian one. 
Then there is a polynomial automorphism $\Phi: \K^n\to \K^n$ with jacobian one, such that $\Phi(a_i)=b_i$ and $d_{a_i} \Phi= L_i$ for $i=1,\ldots, k.$
\end{theorem}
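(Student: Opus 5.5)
First I reduce to the case $a_i=b_i$. Theorem \ref{jelonek} holds over any infinite field (see the Remark), so there is a polynomial automorphism $\Psi$ with $\Psi(a_i)=b_i$. Composing with a diagonal linear map $\mathrm{diag}(c,1,\ldots,1)$, and a translation fixing nothing relevant, we may normalize $\Psi$ to have jacobian one. Indeed, the jacobian of a polynomial automorphism is a nonzero constant, and the diagonal map can be applied after a further application of Theorem \ref{jelonek} that restores the points.

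Having done this, it suffices to find an automorphism $\Theta$ with jacobian one such that $\Theta(a_i)=a_i$ and $d_{a_i}\Theta=(d_{a_i}\Psi)^{-1}L_i=:M_i\in SL(n,\K)$. Then $\Phi=\Psi\circ\Theta$ works.

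Next, $SL(n,\K)$ is generated by elementary transvections $I+tE_{pq}$ with $p\neq q$. So I would factor each $M_i$ as a product of elementary matrices. After padding with identity factors, all the factorizations have the same length $N$ and the same pattern of index pairs $(p_j,q_j)$, with scalars $t_{i,j}$ (possibly zero). It then suffices, for each $j$ separately, to construct an automorphism $\Theta_j$ with jacobian one that fixes all $a_i$ and satisfies $d_{a_i}\Theta_j=I+t_{i,j}E_{p_jq_j}$. This uses the chain rule and the fact that every $\Theta_j$ fixes every $a_i$.

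For a single step, say $(p,q)=(1,2)$, I would use a triangular map
\[
\Theta(x)=(x_1+h(x_2,\ldots,x_n),x_2,\ldots,x_n).
\]
This is an automorphism with jacobian one. The conditions become $h(a_i')=0$, $\partial h/\partial x_2(a_i')=t_i$, and $\partial h/\partial x_m(a_i')=0$ for $m\ge3$, where $a_i'$ is the projection of $a_i$ to the coordinates $x_2,\ldots,x_n$. This is Hermite interpolation of first-order data at the finitely many points $a_i'$, and it is solvable by a polynomial provided the $a_i'$ are pairwise distinct.

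The main obstacle is exactly this distinctness, since projections of distinct points may collide. To handle it, I first conjugate by a linear change of coordinates $A\in SL(n,\K)$ chosen so that the points $Aa_i$ have pairwise distinct projections onto every coordinate hyperplane, and even pairwise distinct individual coordinates. This is possible because $\K$ is infinite: the bad $A$ form a proper Zariski-closed subset. Conjugation replaces the target differential $M$ by $AMA^{-1}$, which still lies in $SL(n,\K)$. So I perform the whole elementary factorization in the new coordinates $y=Ax$, where all projections separate the points.

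For Hermite interpolation in one variable beyond $n=2$: the data $h(a_i')=0$ and $\nabla h(a_i')=v_i$ can be realized as $h=\sum_i \ell_i\,\chi_i$. Here $\chi_i$ is a polynomial with $\chi_i(a_l')=\delta_{il}$ and $\nabla\chi_i(a_l')=0$; one can take $\chi_i$ to be a product of squares of affine functions separating $a_i'$ from $a_l'$, rescaled, and corrected linearly so that its gradient vanishes at $a_i'$. The factor $\ell_i$ is the affine function with $\ell_i(a_i')=0$ and gradient $v_i$. This works over any infinite field and in any characteristic, because only values and first derivatives appear, and the square of an affine function vanishing at a point has zero gradient there.
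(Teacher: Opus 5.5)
There is a gap in your normalization of $\Psi$ to jacobian one, and the step is not optional. Transvections $I+tE_{pq}$ generate only $SL(n,\K)$. So if $\Psi$ has constant jacobian $J\neq 1$, then $\det M_i=J^{-1}$ and the factorization of $M_i$ into elementary matrices is unavailable. Theorem~\ref{jelonek}, even granting the Remark, gives no control over the jacobian. As written, your argument is circular: composing with $D=\mathrm{diag}(c,1,\ldots,1)$ moves the $b_i$ to $Db_i$. The ``further application of Theorem~\ref{jelonek} that restores the points'' is then a new automorphism whose jacobian is again unknown, and it may depend on $c$ through the points $Db_i$. So no choice of $c$ is guaranteed to make the total jacobian one. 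The translation plays no role.

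You can repair the step with tools you already have.
\begin{itemize}
\item \emph{Conjugation.} Take $\Psi_1$ from Theorem~\ref{jelonek} sending the $b_i$ to distinct points of $\{x_1=0\}$. Then $\Psi_1^{-1}\circ\mathrm{diag}(J^{-1},1,\ldots,1)\circ\Psi_1$ fixes every $b_i$ and has jacobian exactly $J^{-1}$, because the unknown jacobian of $\Psi_1$ cancels. If this is what you meant, you need to say so, since the cancellation is the whole point.
\item \emph{Triangular correction.} Work in coordinates where the projections $a_i'=(a_{i,2},\ldots,a_{i,n})$ are pairwise distinct. Precompose $\Psi$ with $\theta(x)=(J^{-1}x_1+h(x_2,\ldots,x_n),x_2,\ldots,x_n)$, where $h(a_i')=(1-J^{-1})a_{i,1}$. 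This $\theta$ fixes the $a_i$ and has jacobian $J^{-1}$.
\end{itemize}
The paper sidesteps the issue by proving the point-moving result directly with jacobian one (Theorem~\ref{thm:pointautomorphism}). It uses triangular maps $x_j\mapsto x_j+p_j(x_{j+1})$ and $x_n\mapsto x_n+p_n(x_1)$ after $SL(n,\K)$ coordinate changes. That is precisely the piece your proposal is missing.

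Apart from this, your argument is correct and is essentially the paper's Lemma~\ref{lem:differentials]}: common-pattern elementary factorizations, one jacobian-one triangular map per transvection obtained by Hermite interpolation, and general position via a linear change of coordinates. You compose in the opposite order ($\Psi\circ\Theta$ rather than $G\circ F$). You also use multivariate interpolation where the paper uses the one-variable Chinese Remainder Theorem. Since you arrange pairwise distinct coordinates anyway, a one-variable $h(x_q)$ would already suffice.
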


%We also estimate the degree of a polynomial $\Phi.$

\section{Proof of the main result}

Let us recall the general form of the Chinese Remainder Theorem which can be found for instance in \cite[Section 5]{ksiazka}:

\begin{theorem}\label{hermite}
Let $\K$ be a field of any characteristic. Let $x_1, x_2, \dots, x_k \in \K$ be pairwise distinct points, and $m_1, m_2, \dots, m_k$ be positive integers representing the node multiplicities. For each node $x_{i}$, let   ${g_i} \in \K[x]$ be a given remainder polynomial with $\deg(g_i) < m_i$. 

The system of polynomial congruences in the ring $\K[x]$:

$\begin{aligned}f(x)&\equiv g_{1}(x)\mathinner{\;\left(\mod \,(x-x_{1})^{m_{1}}\right)}\\ f(x)&\equiv g_{2}(x)\mathinner{\;\left(\mod \,(x-x_{2})^{m_{2}}\right)}\\ &\;\;\vdots \\ f(x)&\equiv g_{k}(x)\mathinner{\;\left(\mod \,(x-x_{k})^{m_{k}}\right)}\end{aligned}$

has a unique solution $f(x) \in \K[x]$ satisfying the degree constraint  $\deg (f)<\sum _{i=1}^{k}m_{i}$.
\end{theorem}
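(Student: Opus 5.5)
The statement to prove is the Chinese Remainder Theorem in $\K[x]$ (Theorem~\ref{hermite}). I plan to prove uniqueness and existence separately, each by elementary divisibility and dimension arguments.

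\textbf{Uniqueness.} Suppose $f$ and $\tilde f$ are two solutions with degree $<N:=\sum_{i=1}^k m_i$. Then each $(x-x_i)^{m_i}$ divides $f-\tilde f$. The $x_i$ are pairwise distinct, so the polynomials $(x-x_i)^{m_i}$ are pairwise coprime. The quickest way to see this is unique factorization in the PID $\K[x]$: they share no irreducible factor. Hence their product $P(x)=\prod_i (x-x_i)^{m_i}$, which has degree $N$, divides $f-\tilde f$. Since $\deg(f-\tilde f)<N$, this forces $f=\tilde f$.

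\textbf{Existence.} I would use a linear-algebra count rather than explicit Bezout coefficients. Let $V=\{f\in\K[x]:\deg f<N\}$, a $\K$-vector space of dimension $N$. Define the linear map
\[
\Psi\colon V\to \prod_{i=1}^k \K[x]/((x-x_i)^{m_i}), \qquad f\mapsto (f \bmod (x-x_i)^{m_i})_i .
\]
The target has dimension $\sum_i m_i=N$. The uniqueness argument shows exactly that $\ker\Psi=0$. An injective linear map between spaces of equal finite dimension is surjective. So there is an $f\in V$ with $f\equiv g_i \pmod{(x-x_i)^{m_i}}$ for every $i$, because each $g_i$ with $\deg g_i<m_i$ is a representative of a class in the $i$-th factor.

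Alternatively, one could give a constructive proof. Bezout's identity supplies $u_i,v_i$ with $u_i(x-x_i)^{m_i}+v_i P_i=1$, where $P_i=P/(x-x_i)^{m_i}$. Then $f=\sum_i g_i v_i P_i$ reduced modulo $P$ is a solution. The dimension argument avoids this computation.

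There is no real obstacle here. The only point requiring care is the coprimality of the factors, which uses that the nodes are distinct. The argument holds in any characteristic, since only field axioms and finite-dimensional linear algebra are used.
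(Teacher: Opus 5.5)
Your proof is correct and complete. The paper gives no argument of its own for this statement: it quotes it as the standard Chinese Remainder Theorem from the cited textbook (Section~5 there), so there is no in-paper proof to compare step by step.

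The usual textbook route is ring-theoretic. The ideals $((x-x_i)^{m_i})$ are pairwise comaximal, so the natural map $\K[x]/(P)\to\prod_i \K[x]/((x-x_i)^{m_i})$, with $P=\prod_i (x-x_i)^{m_i}$, is an isomorphism. It is injective because the intersection of the ideals is $(P)$, and surjective via the B\'ezout idempotents $v_iP_i$, exactly as in your alternative paragraph. One then uses that the polynomials of degree $<N$ form a unique system of representatives of $\K[x]/(P)$.

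Your argument keeps the injectivity half unchanged; that is your uniqueness step. It replaces the construction of idempotents by a dimension count, using $\dim_\K \K[x]/((x-x_i)^{m_i})=m_i$ by division with remainder. This is shorter and makes existence a formal consequence of uniqueness. However, it is non-constructive and relies on working over a field with finite-dimensional quotients. The B\'ezout version works for comaximal ideals in any commutative ring and produces an explicit interpolant.

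In either version nothing depends on the characteristic. The hypothesis $\deg g_i<m_i$ plays no role in existence or uniqueness; it only makes the $g_i$ canonical representatives. That is all the paper needs for its applications with $m_i\in\{1,2\}$.
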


\begin{corollary}\label{glowne}
Let $\K$ be a field of any characteristic. Let $x_1, x_2, \dots, x_k \in \K$ be pairwise distinct points and $a_1, a_2, \dots, a_k;$ $b_1,\dots, b_k$ be elements from $\K.$ 
Then there exist  polynomials $f, p \in \K[x]$ such that 
$$p(x_i)= a_i$$
and
$$f(x_i)=a_i, f'(x_i)=b_i$$
for $ i=1,\ldots, k$. Moreover, $\deg p <k$ and  $\deg f<2k$. %Similarly, there is a plynomial $G$ of degree less than $k$, such that $G(x_i)=a_i$ for $i=1,\ldots,k.$
\end{corollary}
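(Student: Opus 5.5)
We derive both assertions of Corollary~\ref{glowne} directly from the Chinese Remainder Theorem~\ref{hermite}, choosing the multiplicities and the remainder polynomials so that the congruences encode exactly the required interpolation conditions.

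For the polynomial $p$, take all multiplicities $m_i=1$ and the constant remainders $g_i(x)=a_i$, which satisfy $\deg g_i<1$. Theorem~\ref{hermite} gives $p\in\K[x]$ with $\deg p<k$ and $p\equiv a_i \pmod{x-x_i}$. Since $x-x_i$ divides $p(x)-a_i$, evaluating at $x_i$ gives $p(x_i)=a_i$.

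For the polynomial $f$, take $m_i=2$ and the linear remainders $g_i(x)=a_i+b_i(x-x_i)$, which satisfy $\deg g_i<2$. Theorem~\ref{hermite} gives $f$ with $\deg f<2k$ and
$$f(x)=a_i+b_i(x-x_i)+(x-x_i)^2h_i(x)$$
for some $h_i\in\K[x]$. Evaluating at $x_i$ gives $f(x_i)=a_i$.

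The only point needing attention is the derivative condition in arbitrary characteristic, where we must use the formal derivative. It is $\K$-linear and satisfies the Leibniz rule in any characteristic, so
$$\bigl((x-x_i)^2h_i\bigr)'=2(x-x_i)h_i+(x-x_i)^2h_i'.$$
Both terms vanish at $x_i$, whether or not $2=0$ in $\K$. Therefore $f'(x_i)=b_i$.

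Nothing here is a genuine obstacle. The only care needed is to argue with the formal derivative and the factor $(x-x_i)^2$ rather than with any Taylor-expansion formula involving factorials, which could fail in positive characteristic.
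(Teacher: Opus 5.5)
Your proof is correct and follows the paper's argument: apply Theorem~\ref{hermite} with $m_i=1$, $g_i=a_i$ to get $p$, and with $m_i=2$, $g_i=a_i+b_i(x-x_i)$ to get $f$, then apply the Leibniz rule to $(x-x_i)^2h_i$. Writing the cofactor as $h_i$, depending on $i$, is slightly more accurate than the paper's single $q$.
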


\begin{proof}
In Theorem \ref{hermite} set  $g_i(x)=a_i+b_i(x-x_i)$ for $ i=1,\ldots, k$. If we also set $m_i=2$ for $ i=1,\ldots, k$, there exists a polynomial $f$ of degree less than $2k$ such that
$f(x)\equiv g_{i}(x) \mod \,(x-x_{i})^{2}$, i.e., $$f(x)=a_i+b_i(x-x_i)+(x-x_i)^2q(x)$$ 
for some polynomial $q\in \K[x]$.  This gives at once $f(x_i)=a_i.$
Moreover, $f'(x)=b_i+(x-x_i)^2q'(x)+2(x-x_i)q(x)$, i.e., $f'(x_i)=b_i.$ 

To obtain the polynomial $p$ consider $m_i=1$ for $ i=1,\ldots, k$.
\end{proof}

Now we will give here the simple proof 
 of Theorem \ref{jelonek} that shows the result and proof of \cite{jel} is valid in fact over any infinite field. Let us state it in a slightly more general version:

\begin{theorem}\label{thm:pointautomorphism}
Let $\K$ be an infinite field. Let $n\ge 2$. 
For any two sets   $\{a^1,\ldots,a^k\}$ and $\{b^1,\ldots,b^k\}$  of $k$ distinct points in~$\K^n$ there exists a polynomial automorphism $F:\K^n \to \K^n$ of degree at most $k^2$ and jacobian determinant $1$ such that
$$ F(a^i) = b^i $$
for each $i=1,\dots,k$.
\end{theorem}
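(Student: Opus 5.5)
We build $F$ as a composition of triangular (shear) automorphisms of the form
$$T(x_1,\ldots,x_n)=(x_1,\ldots,x_{j-1},\,x_j+p(x_l),\,x_{j+1},\ldots,x_n),\qquad l\neq j,$$
where $p\in\K[t]$ is given by the interpolation part of Corollary~\ref{glowne}. Such a $T$ is an automorphism with inverse $x_j\mapsto x_j-p(x_l)$, and its jacobian determinant is $1$, since its Jacobian matrix is unipotent. Any composition of such maps therefore also has jacobian one.

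\textbf{Step 1: generic position.} Since $\K$ is infinite, we first apply a linear map in $SL(n,\K)$ to the points $a^i$. Choosing it outside finitely many proper Zariski-closed conditions, all coordinates $x_1,\ldots,x_n$ of the $a^i$ become pairwise distinct in each coordinate. We do the same for the $b^i$ with another map $B\in SL(n,\K)$. Composing at the end with $B^{-1}$, it suffices to treat the case where both point sets have pairwise distinct first coordinates and pairwise distinct second coordinates.

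\textbf{Step 2: move through intermediate points.} We pick auxiliary points $c^i$ whose first coordinates equal those of $a^i$ and whose remaining coordinates are chosen freely; the only requirement is that the second coordinates of the $c^i$ are pairwise distinct and different from everything needed later. This is possible because $\K$ is infinite.

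\textbf{Step 3: the chain of shears.} We carry out the following substeps in order.
\begin{enumerate}
\item For each $j\ge 2$, Corollary~\ref{glowne} gives a polynomial $p_j$ with $\deg p_j<k$ and $p_j(a^i_1)=c^i_j-a^i_j$. This interpolation is possible because the $a^i_1$ are distinct. The composition of the shears $x_j\mapsto x_j+p_j(x_1)$, $j\ge 2$, sends $a^i$ to $c^i$.
\item Using the distinct second coordinates $c^i_2$, we interpolate a polynomial in $x_2$ and apply $x_1\mapsto x_1+q(x_2)$. This replaces the first coordinate $c^i_1$ by $b^i_1$, giving points $d^i$.
\item Since the $b^i_1$ are distinct, we interpolate in $x_1$ and shear $x_j\mapsto x_j+r_j(x_1)$ for $j\ge 2$. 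This sends each $d^i$ to $b^i$.
\end{enumerate}

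\textbf{Main obstacle: the degree bound.} Each individual shear has degree less than $k$. The difficulty is that composing three layers of triangular maps would naively give a degree up to about $k^3$, not $k^2$. The maps in substep 1 have degree $<k$ in $x_1$ and leave $x_1$ untouched. Substep 2 has the form $x_1\mapsto x_1+q(x_2)$ with $x_2$ already of degree $<k$, so after it the first coordinate has degree at most $(k-1)^2$. Substep 3 then feeds this first coordinate into polynomials of degree $<k$, which could reach $(k-1)^3$.

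To avoid this, I would reorder the construction so that only two nested layers occur. Starting from $a^i$, first shear the second coordinate $x_2\mapsto x_2+p(x_1)$ to make it equal to $b^i_2$. The degree-$<k$ interpolation in $x_1$ works because the $a^i_1$ are distinct, and the target values $b^i_2$ are distinct by Step 1. Next, use interpolation in $x_2$ to fix every other coordinate: apply $x_j\mapsto x_j+r_j(x_2)$ for $j\ne 2$, including $j=1$. These shears commute with each other, since none of them changes $x_2$.

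The composition is then $x_2\mapsto x_2+p(x_1)$, of degree $\le k-1$, followed by maps that substitute this second coordinate into polynomials $r_j$ of degree $\le k-1$. The total degree is at most $(k-1)^2+1\le k^2$, and the linear changes of Step 1 do not affect degree. This two-layer scheme, valid once the coordinates are in generic position, is the proof I would write. The only real checks are that Step 1's genericity conditions are finitely many nontrivial polynomial conditions, and that the final composition indeed sends $a^i\mapsto b^i$.
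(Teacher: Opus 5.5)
Your final two-layer construction is correct and is essentially the paper's argument. After a generic $SL_n(\K)$ change of coordinates, you compose two layers of shears built from interpolation polynomials of degree $<k$ (Corollary~\ref{glowne}), which gives jacobian one and degree at most $\max(1,(k-1)^2)\le k^2$. The only difference is the order: the paper first corrects $x_1,\dots,x_{n-1}$ with the triangular map $x_j\mapsto x_j+p_j(x_{j+1})$ and then $x_n$ via $x_1$, while you first correct $x_2$ via $x_1$ and then all other coordinates via $x_2$, so you need distinct coordinates in only one slot for each point set. Your three-layer Steps~2--3 are superseded and can simply be dropped.
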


\begin{proof}

It is easy to see that there exist linear isomorphisms $A,B\in SL_n(\K)$ such that the points $A(a^1),\dots, A(a^k)$  are in a general position such that no two points have the same coordinate, i.e.  $a_i^j\neq a_i^l$ for all $i\leq n$, $j, l\leq k$ and $j\neq l$, and the points $B(b^1),\dots,B(b^k)$ are such that no two first coordinates are the same. If there exists an automorphism $F$ satisfying the claim in these new coordinates, then $B^{-1}\circ F\circ A$ satisfies the claim in old coordinates. Thus without loss of generality we may assume that the sets  $\{a^1,\ldots,a^k\}$ and $\{b^1,\ldots,b^k\}$ are in a general position.

By Corollary \ref{glowne} for $1\leq j \leq n-1$ exist polynomials $p_j\in \K[t]$ of degree at most~$k$  such that 
$$p_j(a^{i}_{j+1})=b^{i}_{j}-a^{i}_{j}$$
for every $i=1,\dots, k$.
Let
$$P(x_1,\dots, x_n):=(x_1+p_{1}(x_{2}),\dots,\ x_{n-1}+p_{n-1}(x_{n}),\ x_n).$$
Then $P$ is a tame automorphism and $P(a^i)=(b^{i}_{1},\dots, b^{i}_{n-1}, a^{i}_{n}).$

Let $p_n\in\K[t]$ be such that 
$$p_n(b^{i}_{1})=b^{i}_{n}-a^{i}_{n}$$
for every $i=1,\dots,k$. 
Take
$$H(x_1,\dots,x_n):=(x_1,\dots,x_{n-1},\ x_n+p_n(x_1)).$$
Then  $H$ is a tame polynomial automorphisms of $\K^n.$ Let $$F=  H\circ P.$$ 
We have
$$F(a^i)=  (H\circ P)(a^i)=H(b^{i}_{1},\dots, b^{i}_{n-1}, a^{i}_{n})=(b^{i}_{1},\dots, b^{i}_{n})=b_i $$
%Thus $H\circ P(a_i)=b_i$
 for $i=1,\dots, k.$ Since $\deg P, \deg H\le k$ we have $\deg F\le k^2.$
\end{proof}

To complete the proof of the main result we need now the following: 

\begin{lemma}\label{lem:differentials}
	Under notation of Theorem~\ref{thm:main} there exists an automorphism $G$ of $\K^n$ of degree at most $(2k)^{kn^2} $ that fixes the set  
	$\{b^1,\ldots,b^k\}$ and satisfies $d_{b^j}G=L_j$ for $j=1,\dots, k$.
\end{lemma}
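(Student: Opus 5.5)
The plan is to build $G$ as a composition of \emph{elementary shears}
$$T(x)=(x_1,\dots,x_{i-1},\,x_i+g(x_l),\,x_{i+1},\dots,x_n),\qquad i\neq l,\ g\in\K[t].$$
Each such $T$ is a tame automorphism with jacobian one. If $g$ vanishes at the $l$-th coordinates of the points $b^j$, then $T$ fixes every $b^j$. Its differential there is the elementary transvection
$$d_{b^j}T=I+g'(b^j_l)\,e_{il}.$$
So once the points have pairwise distinct $l$-th coordinates, Corollary~\ref{glowne} gives $g$ of degree $<2k$ with $g(b^j_l)=0$ and $g'(b^j_l)=c_j$ for arbitrary prescribed $c_1,\dots,c_k\in\K$. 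In other words, one shear realises, simultaneously at all $k$ points, the transvections $I+c_je_{il}$ at a \emph{common} position $(i,l)$ with \emph{independent} coefficients.

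First step: reduce to general position, as in the proof of Theorem~\ref{thm:pointautomorphism}. Choose $A\in SL_n(\K)$ such that the points $A(b^j)$ have pairwise distinct coordinates in every slot; this is possible because $\K$ is infinite. Then set $b'^j=A(b^j)$ and $L'_j=AL_jA^{-1}\in SL_n(\K)$. If $G'$ fixes each $b'^j$ and satisfies $d_{b'^j}G'=L'_j$, then $G=A^{-1}\circ G'\circ A$ fixes each $b^j$ and has $d_{b^j}G=L_j$.

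Second step: write each $L'_j$ as a product of elementary transvections following one \emph{fixed} word of positions $(i_1,l_1),\dots,(i_N,l_N)$, independent of $j$, where only the coefficients depend on $j$ and zero coefficients are allowed. This is the step I expect to be the main obstacle, because ordinary row reduction yields words that depend on the matrix. I would handle it by using a decomposition of $SL_n(\K)$ into a bounded product of unitriangular matrices alternating between lower and upper, such as $SL_n(\K)=U^-U^+U^-U^+$. Gaussian elimination with the pivot made nonzero by a preliminary unitriangular operation gives such a decomposition. Each unitriangular matrix is a product of elementary matrices in a fixed order of positions, with its entries serving as coefficients. This gives a universal word of length $N\le 2n^2\le kn^2$ (say), with coefficients $c^{(s)}_j$ such that
$$L'_j=\prod_{s=1}^{N}\bigl(I+c^{(s)}_je_{i_sl_s}\bigr)\qquad\text{for every } j.$$

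Third step: for each $s$, use Corollary~\ref{glowne} to take $g_s$ with $\deg g_s<2k$, $g_s(b'^j_{l_s})=0$ and $g_s'(b'^j_{l_s})=c^{(s)}_j$, and let $T_s$ be the corresponding shear. Put $G'=T_1\circ\cdots\circ T_N$, ordered to match the product. Since every $T_s$ fixes every $b'^j$, the chain rule evaluates every factor at the same point $b'^j$, so
$$d_{b'^j}G'=\prod_{s=1}^{N}d_{b'^j}T_s=L'_j.$$
Each factor has degree $<2k$, so $\deg G'\le(2k)^N\le(2k)^{kn^2}$, and conjugation by the linear map $A$ does not raise the degree. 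Composing with the map from Theorem~\ref{thm:pointautomorphism} then finishes Theorem~\ref{glowne}.
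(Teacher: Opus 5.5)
Your proof is correct, but it builds the common word of elementary matrices differently from the paper. The paper needs no uniform factorization of $SL_n(\K)$. It writes each $L_i$ separately, by Gaussian elimination, as a product of at most $n^2$ elementary matrices, and then concatenates the $k$ words. At the point $b^i$ it uses the coefficients of $L_i$ in the $i$-th block and zero coefficients (identity factors) in all other blocks, which is exactly your observation that zero coefficients are allowed. This needs nothing beyond elimination for a single matrix, and it is where the exponent $kn^2$ in the lemma comes from.

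Your route through $SL_n(\K)=U^-U^+U^-U^+$ gives a word of length $N=2n(n-1)$ independent of $k$. That yields the much sharper bound $\deg G\le (2k-1)^{2n(n-1)}$, polynomial in $k$ for fixed $n$. The price is that the four-factor decomposition is a genuine extra input, and your sketch does not prove it. Elimination after one preliminary unitriangular operation only gives something like $A\in U^+U^-DU^+$, with a leftover diagonal factor $D$.

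To remove $D$, you need $u\in U^+$ and $l\in U^-$ such that every leading principal minor of $uAl$ equals $1$; then $uAl\in U^-U^+$. This can be done one index at a time:
\begin{itemize}
\item Add to row $k$ multiples $\alpha$ of rows $k+1,\dots,n$, and to column $k$ multiples $\beta$ of columns $k+1,\dots,n$.
\item This leaves $\Delta_1,\dots,\Delta_{k-1}$ unchanged and turns $\Delta_k$ into $(1,\alpha)S(1,\beta)^{T}$, where $S$ is the Schur complement of the leading $(k-1)\times(k-1)$ block, so $\det S=1$.
\item Since $S$ is invertible, first choose $\alpha$ so that $(1,\alpha)S$ has a nonzero entry beyond the first, then choose $\beta$ to make the value $1$. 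For $k=n$ the value is automatically $1$.
\end{itemize}
Alternatively, just use the paper's concatenation trick.

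Two small points. First, $2n^2\le kn^2$ fails for $k=1$. In that case every $g_s$ has degree at most $1$, so $G'$ is affine and the bound holds trivially. Second, your explicit conjugation $L'_j=AL_jA^{-1}$ in the general-position step is a detail the paper passes over silently.
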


\begin{proof}
We can assume that points  $\{b_1,\ldots,b_k\}$ are in a general position.
Denote by $E_{rs}\in M_n(\K)$ the  matrix with only the entry $(r,s)$ equal to $1$ and $0$ otherwise. Then we write an elementary matrix in the form $\textup{Id}+c E_{rs}$ with $r\neq s$ and $c\in\K$. Gaussian elimination over $\K$ allows us  to write each $L_i$ as a product of at most $n^2$ elementary matrices. Therefore, allowing elementary matrices to be identity, we can write
$$L_i=M_i^1\cdots M_i^{m}, $$
where
$$M_i^j= \textup{Id}+c_i^j E_{r_js_j}$$
with  $c_i^j\in\K$, and $r_j\neq s_j$  and $m\leq k n^2$ is independent on $i$ and for fixed $j$ all matrices $M_i^j$ are of the same type.
For $1\leq j\leq m$ define a tame automorphism $g_j$ of $\K^n$ as
$$g_j(x_1,\dots,x_n)=(x_1,\dots,x_{r_j}+f_j(x_{s_j}),\dots,x_n)$$
with $f_j\in\K[t]$ of degree at most $2k$
by Corollary \ref{glowne}  and 
satisfying
$$  f_j(b^i_{s_j})=0 \quad \text{and} \quad  f_j'(b^i_{s_j})=c_i^j .$$

Note that
$$g_j(b^i)=b^i \quad \text{and} \quad d_{b^i}g_j=M_i^j.$$

Let
$$G: =g_1\circ\cdots\circ g_m.$$
Then
$$G(b^i)=b^i \quad \text{and} \quad d_{b^i}G=L_i.$$
\end{proof}

Now we can prove our main result:

\begin{theorem}\label{thm:main}
Let  $\K$ be an infinite field and $n\ge 2.$ Let  $\{a^1,\ldots,a^k\}$ and $\{b^1,\ldots,b^k\}$ be sets of $k$ distinct points in $\K^n$,  and $L_1,\ldots,L_k\in SL(n,\K)$. 
There exists a polynomial automorphism $\Phi$ of $\K^n$ of degree at most $k^2(2k)^{kn^2}$
such that
$$ \Phi(a^i)=b^i \quad \text{and}\quad   d_{a^i}\Phi=L_i  $$
for each $i=1,\dots,k$.
\end{theorem}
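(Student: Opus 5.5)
The plan is to compose the two previous constructions. First apply Theorem~\ref{thm:pointautomorphism} to obtain a polynomial automorphism $F$ of $\K^n$ with jacobian determinant $1$, degree at most $k^2$, and $F(a^i)=b^i$ for $i=1,\dots,k$. Its differentials $d_{a^i}F$ are then in $SL(n,\K)$. This holds because the jacobian determinant of $F$ is identically $1$; indeed $F=H\circ P$ with $P,H$ triangular maps whose jacobian matrices are unipotent triangular. We do not control these differentials, so we correct them afterwards.

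Next set $L_i' := L_i\,(d_{a^i}F)^{-1}\in SL(n,\K)$ for $i=1,\dots,k$. Apply Lemma~\ref{lem:differentials} to the points $b^1,\dots,b^k$ and the matrices $L_1',\dots,L_k'$. This gives an automorphism $G$ of degree at most $(2k)^{kn^2}$ with $G(b^i)=b^i$ and $d_{b^i}G=L_i'$. Put $\Phi:=G\circ F$. Then $\Phi(a^i)=G(b^i)=b^i$, and by the chain rule
\[
d_{a^i}\Phi=d_{b^i}G\cdot d_{a^i}F=L_i(d_{a^i}F)^{-1}d_{a^i}F=L_i .
\]
The jacobian of $\Phi$ is $1$, since $G$ is a composition of elementary maps $x_r\mapsto x_r+f(x_s)$ with $r\ne s$, and $F$ has jacobian $1$. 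The degree of a composition is at most the product of the degrees, so $\deg\Phi\le k^2(2k)^{kn^2}$.

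The main point to check is the use of Lemma~\ref{lem:differentials}, whose proof begins with ``we can assume the points $b^i$ are in general position.'' This needs justification when the matrices are changed. Choose $B\in SL(n,\K)$ such that the coordinates of the points $B(b^i)$ are pairwise distinct in each coordinate; this is possible since $\K$ is infinite. Build $\tilde G$ for the points $B(b^i)$ with targets $B L_i' B^{-1}$, and set $G=B^{-1}\tilde G B$. Then $G$ still fixes each $b^i$, has $d_{b^i}G=L_i'$, and keeps the same degree. Distinctness of the $s_j$-th coordinates is exactly what Corollary~\ref{glowne} needs to produce $f_j$ with $f_j(b^i_{s_j})=0$ and $f_j'(b^i_{s_j})=c_i^j$. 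One also needs a common factorization length $m$ with matching elementary types across all $i$. This can be arranged by padding each factorization with identity factors, since the identity is the elementary matrix with $c=0$. Listing the $\le n^2$ elementary types used for each $L_i'$ in sequence gives $m\le kn^2$.
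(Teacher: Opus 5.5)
Your proposal is correct and follows the paper's proof: take $F$ from Theorem~\ref{thm:pointautomorphism}, apply Lemma~\ref{lem:differentials} to the corrected matrices $L_i(d_{a^i}F)^{-1}\in SL(n,\K)$ to get $G$, set $\Phi=G\circ F$, and conclude by the chain rule and multiplicativity of the degree bounds. You also fill in details the paper leaves implicit: the general-position reduction in the lemma (conjugating by $B$ with targets $BL_i'B^{-1}$), and the common factorization length with matching types, obtained by padding with identity factors.
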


\begin{proof}
By Theorem~\ref{thm:pointautomorphism} there exists a polynomial automorphism $F$  such that
$$F(a^i)=b^i$$
for every $i=1,\dots,k$. 
For each $i=1,\dots,k$ set
$$K_i=L_i\circ(d_{a^i}F)^{-1}\in SL_n(\K).$$

By Lemma~\ref{lem:differentials} there is  an automorphism $G$ that fixes the points $b_i$ and satisfies the condition  $d_{b^j}G=K_j$ for $j=1,\dots, k.$
Now it is enough to put 
$$\Phi:= G\circ F. $$
\end{proof}

\begin{corollary}
	Under notation of Theorem~\ref{thm:main}
	%The polynomial automorphism  of Theorem~\ref{thm:main} is homotopic to identity. More precisely, 
	there exists a polynomial isotopy $\Psi: \K^n\times \K\to \K^n$  such that $\Psi(\cdot, t)$ for every $t\in \K$ is an automorphism with jacobian determinant $1$,
	$$ \Psi(\cdot,0) = \textup{id}_{\K^n}$$ 
	%\quad \text{and} \quad
	and
	$$ \Psi(\cdot,1)= : \Phi $$
	satisfies the claim of Theorem~\ref{thm:main}.
\end{corollary}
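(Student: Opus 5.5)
The plan is to make every ingredient of the construction depend polynomially on a parameter $t$, so that the automorphism built in the proof of Theorem~\ref{thm:main} becomes $\Psi(\cdot,1)$ of a family which is the identity at $t=0$. Recall that $\Phi=G\circ F$, where $F=H\circ P$ (possibly conjugated by the linear maps $A,B$ from the proof of Theorem~\ref{thm:pointautomorphism}) and $G=g_1\circ\cdots\circ g_m$ (likewise possibly conjugated by a linear change of coordinates). Every factor is either a triangular (elementary) map of the form $x\mapsto(x_1,\dots,x_r+h(x_s),\dots,x_n)$ with $h\in\K[x_s]$, or a linear map in $SL(n,\K)$.

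\textbf{Step 1: the elementary factors.} For an elementary automorphism $e(x)=(x_1,\dots,x_r+h(x_s),\dots,x_n)$ with $r\neq s$, set
$$e_t(x)=(x_1,\dots,x_r+t\,h(x_s),\dots,x_n).$$
For every $t\in\K$ this is an automorphism, with inverse obtained by replacing $t$ by $-t$. Its Jacobian matrix is unipotent, so its Jacobian determinant is $1$. Moreover $e_0=\textup{id}$ and $e_1=e$, and the family is polynomial jointly in $(x,t)$. This covers $P$ (a composition of such maps, one for each $p_j$), $H$, and all the $g_j$.

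\textbf{Step 2: the linear factors.} Write each linear map $A\in SL(n,\K)$ used in the proof as a product of elementary matrices $\textup{Id}+cE_{rs}$, by Gaussian elimination as in Lemma~\ref{lem:differentials}. Then replace each factor by $\textup{Id}+tcE_{rs}$. The resulting $A_t$ is polynomial in $t$, lies in $SL(n,\K)$ for every $t$, and satisfies $A_0=\textup{Id}$ and $A_1=A$. The same is done for $A^{-1}$: we either write $(A^{-1})_t$ as the reverse product with negated coefficients, or simply note that we only need some path for each factor. It does not matter that the resulting family is not $A_t^{-1}$ for intermediate $t$; each factor only needs its own polynomial path.

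\textbf{Step 3: assembly.} Define $\Psi(\cdot,t)$ as the composition, in the same order, of all the parametrized factors from Steps 1 and 2. A composition of maps that are polynomial in $(x,t)$ is polynomial in $(x,t)$. For each $t$ it is a composition of automorphisms with Jacobian determinant $1$, so by the chain rule it is again an automorphism with Jacobian determinant $1$. At $t=0$ every factor is the identity, so $\Psi(\cdot,0)=\textup{id}_{\K^n}$. At $t=1$ every factor equals the original one, so $\Psi(\cdot,1)=\Phi$, and $\Phi$ satisfies the interpolation and differential conditions by Theorem~\ref{thm:main}.

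The only point needing care is to confirm that $\Phi$ as constructed really is a finite composition of elementary triangular maps and elementary linear maps. This includes the changes of coordinates behind the "without loss of generality, general position" reductions in Theorem~\ref{thm:pointautomorphism} and Lemma~\ref{lem:differentials}, which are linear maps in $SL(n,\K)$ and hence products of elementary matrices. Once this tame decomposition is made explicit, the isotopy follows factor by factor and nothing else is needed.
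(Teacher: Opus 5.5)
Your proposal is correct and follows the paper's proof: replace each triangular factor $x_r\mapsto x_r+h(x_s)$ in $P$, $H$ and the $g_j$ by $x_r\mapsto x_r+t\,h(x_s)$, and compose. Your Step~2 also connects the linear coordinate changes $A,B$ (and the one in Lemma~\ref{lem:differentials}) to the identity via products of $\textup{Id}+tcE_{rs}$; this fills a point the paper leaves implicit, since $B^{-1}\circ H_t\circ P_t\circ A$ by itself equals $B^{-1}A$ at $t=0$ rather than the identity.
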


\begin{proof}
In the construction of Theorem~\ref{thm:pointautomorphism} it suffices to consider 
$$ P_t(x):= (x_1+tp_{1}(x_{2}),\dots,\ x_{n-1}+tp_{n-1}(x_{n}),\ x_n) $$
and 
$$H_t(x):=(x_1,\dots, x_{n-1},\ x_n+tp_n(x_1)).$$
Thus $F_t:= H_t\circ P_t$ gives homotopy from  the identity to the mapping  $F.$ 

In the construction of Theorem~\ref{thm:main} similarly one considers
$$(g_j)_t(x_1,\dots, x_n)=(x_1,\dots,x_{r_j}+tf_j(x_{s_j}),\dots,x_n)$$
and
$$G_t: =(g_1)_t\circ\cdots\circ (g_m)_t.$$

\end{proof}

\begin{corollary}\label{glowne1}
Let $\K$ be an algebraically closed field of any characteristic. Let~$X$ be a smooth affine variety over $\K$ of dimension $n$ and let $a_1,\dots,a_k$ be different points of $X$.
Take different points $b_1,\dots, b_k$ in $\K^{2n+1}$ and  linear $n$ dimensional spaces $H_1,\dots, H_k$ in $\K^{2n+1}$.
Then there is a closed embedding $\phi: X\to \K^{2n+1}$, such that $\phi(a_i)=b_i$ and $T_{b_i} \phi(X)=H_i$ for $i=1,\dots,k.$
\end{corollary}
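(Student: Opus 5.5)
We prove Corollary~\ref{glowne1} in two steps. First we take an arbitrary closed embedding of $X$ into $\K^{2n+1}$. Then we correct the images of the points $a_i$ and their tangent spaces with a polynomial automorphism of $\K^{2n+1}$ given by Theorem~\ref{thm:main}. Composing a closed embedding with an automorphism of the ambient space gives a closed embedding again, and the automorphism moves tangent spaces by its differentials.

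\textbf{Step 1: a closed embedding $\psi: X\to \K^{2n+1}$.} Since $X$ is smooth, affine and of dimension $n$, the classical embedding theorem (Srinivas, valid over any algebraically closed field) gives such a $\psi$. Concretely, embed $X\subset \K^N$ and project generically. A generic linear projection to $\K^{2n+1}$ is injective, because the secant variety has dimension at most $2n+1$. It is immersive, because the tangent variety has dimension at most $2n$. With a suitable choice of coordinates (for instance projectivizing and projecting from a point outside the closure of the secant variety in the hyperplane at infinity) the projection is also proper, hence a closed embedding. I expect this step to be the main obstacle if one wants to be self-contained. Here I would simply cite the known result and only sketch the dimension count.

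\textbf{Step 2: the correcting automorphism.} Put $c_i:=\psi(a_i)$. These points are pairwise distinct since $\psi$ is injective. Let $V_i:=d_{a_i}\psi(T_{a_i}X)$. Because $\psi$ is an immersion, $V_i$ is an $n$-dimensional linear subspace of $\K^{2n+1}$, and $T_{c_i}\psi(X)=V_i$ (as a linear subspace directed at $c_i$). For each $i$ we choose $L_i\in SL(2n+1,\K)$ with $L_i(V_i)=H_i$, where $H_i$ is read as the direction of the required tangent space at $b_i$. To find $L_i$, extend a basis of $V_i$ and a basis of $H_i$ to bases of $\K^{2n+1}$, and let $L_i$ send the first basis to the second. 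This map lies in $GL$; to get determinant one, rescale one basis vector in the complement of $H_i$, which leaves the subspace $H_i$ unchanged. Now apply Theorem~\ref{thm:main} with dimension $2n+1\ge 2$, the points $c_i$ and $b_i$, and the matrices $L_i$. Since $\K$ is algebraically closed, it is infinite, so the theorem applies. It gives an automorphism $\Phi$ with $\Phi(c_i)=b_i$ and $d_{c_i}\Phi=L_i$.

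\textbf{Step 3: conclusion.} Set $\phi:=\Phi\circ\psi$. It is a closed embedding because $\Phi$ is an isomorphism of $\K^{2n+1}$. We have $\phi(a_i)=\Phi(c_i)=b_i$. By the chain rule,
$$T_{b_i}\phi(X)=d_{c_i}\Phi\bigl(T_{c_i}\psi(X)\bigr)=L_i(V_i)=H_i$$
for $i=1,\dots,k$, as required.
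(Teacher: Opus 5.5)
Your proof is correct and follows the same route as the paper. You take any closed embedding $X\hookrightarrow\K^{2n+1}$, choose $L_i$ carrying the image tangent spaces to the $H_i$, and compose with the automorphism from Theorem~\ref{thm:main}. You also fill in details the paper leaves implicit, namely the generic-projection sketch for the embedding and the determinant-one normalization with $L_i\in SL(2n+1,\K)$; the paper's ``$SL(n,\K)$'' there is a typo.
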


\begin{proof}
It is well known that there is a closed embedding $\iota: X\to \K^{2n+1}$ (see for instance \cite{jel}). Take $X'=\iota(X)$ and $\iota(a_i)=a'_i$ for $ i=1,\dots, k.$ Let $L_i\in SL(n,\K)$ be linear isomorphism, such that
$L_i(T_{a'_i} X')=H_i,\ i=1,\dots, k.$ Now let $\Phi:\K^{2n+1}\to\K^{2n+1}$ be a polynomial automorphism with jacobian one, such that $\Phi(a'_i)=b_i$ and $d_{a'_i}\Phi=L_i$
for $i=1,\dots, k.$ Thus $\phi=\Phi\circ\iota$ is a desired embedding.
\end{proof}

\begin{remark}	
{\rm Theorem~\ref{thm:main}  holds also over finite fields provided the number $k$ of points is small enough in comparison to the order of the field.}
\end{remark}

\end{document}